\newtheorem{thm}{Theorem}[section]
\newtheorem{lem}[thm]{Lemma}
\theoremstyle{definition}
\newtheorem{defn}[thm]{Definition}
\theoremstyle{remark}
\newtheorem{rem}[thm]{Remark}
\theoremstyle{definition}
\theoremstyle{definition}
\theoremstyle{definition}
\numberwithin{equation}{section} 
\title{Geometric triangulations and flips}
\author{Guillaume Tahar}
\address[Guillaume Tahar]{Faculty of Mathematics and Computer Science, Weizmann Institute of Science,
Rehovot, 7610001, Israel}
\email{tahar.guillaume@weizmann.ac.il}
\date{June 28, 2019}
\keywords{Flip, Triangulated Surface, Flat structure}
\begin{document}
\begin{abstract}
We prove that for a given flat surface with conical singularities, any pair of geometric triangulations can be connected by a chain of flips.\newline
\end{abstract}
\maketitle
\setcounter{tocdepth}{1}
\tableofcontents

\section{Introduction}

Topological surfaces can be studied by combinatorial means through simplicial decompositions.

\begin{defn} A topological triangulation of a topological surface with marked points and boundary (possibly empty) is a maximal family of topological arcs connecting marked points an such that they do not intersect themselves or each other in their interior. The arcs cuts out the surface into ideal triangles (vertices and edges may be not distinct).
\end{defn}

The set of topological triangulations of a given topological surface has a rich combinatorial structure. Elementary transformations of triangulations are called flips.

\begin{defn} A flip is a transformation of an ideal triangulation that removes an edge that is a diagonal of a quadrilateral and replaces it by the other diagonal.
\end{defn}

In the topological setting, it is proved in \cite{FST, Ha} that for a topological surface with marked points, any pair of ideal triangulations can be joined by a chain of flips. In other words, the flip graph of topological triangulations of a topological surface is connected.\newline
The problem is quite different in flat geometry. If we require edges to be geodesic segments, some topological triangulations may not appear as geometric triangulations of a given flat surface. Similarly, an edge of a flat triangulation cannot be flipped when the associated quadrilateral is nonconvex. We prove that a similar theorem holds in the flat setting as well as in the topological setting. The flip graph of geometric triangulations of a flat surface (which is a subgraph of the flip graph of topological triangulations) is also connected.\newline
In the following, a flat surface is a topological compact surface with an everywhere flat metric outside of a finite set of conical singularities (of arbitrary angle) and boundary formed by a finite (possibly empty) union of geodesic segments connecting conical singularities. The boundary does not need to be connected.\newline
In particular, such a definition includes translation surfaces and more generally $1/k$-translation surfaces, see \cite{Zo, BCGGM}. We can also count marked points as conical singularities of angle $2\pi$.

\begin{defn} For a given flat surface, a geometric triangulation is a topological triangulation whose edges (including the boundary) are geodesic segments and whose vertices are conical singularities (every conical singularity should be a vertex of the triangulation).
\end{defn}

\begin{rem}
There are no bigons in flat surfaces of finite area. Consequently, there is at most one geodesic segment in each isotopy class of topological arcs of the surface punctured at the conical singularities.
\end{rem}

\begin{thm}[Main Theorem]
For a given flat surface, any pair of geodesic triangulations can be connected by a chain of flips.
\end{thm}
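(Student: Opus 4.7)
The plan is to show that every geometric triangulation of the flat surface is flip-connected to some \emph{Delaunay triangulation}, and that any two Delaunay triangulations are flip-connected to each other; concatenating the two chains will then prove the theorem.

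First I would recall the Delaunay cell decomposition of a flat surface with conical singularities (after Masur-Smillie and Bowditch): its 2-cells are convex polygons inscribed in locally embedded empty disks. A Delaunay triangulation is any triangulation obtained by subdividing each Delaunay cell arbitrarily into triangles. Since any two triangulations of a convex Euclidean polygon are connected by flips, any two Delaunay triangulations of the surface differ only by such local choices and are therefore flip-connected.

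The core step is a Lawson-type flip algorithm. Call an edge $e$ \emph{locally Delaunay} if the sum of the two angles facing $e$ in its adjacent triangles is at most $\pi$. The key geometric lemma is that if $e$ is not locally Delaunay, then the Euclidean quadrilateral obtained by unfolding its two adjacent triangles along $e$ is strictly convex and the other diagonal is an admissible flip: the four interior angles of the unfolded quadrilateral sum to $2\pi$, so failure of the local Delaunay condition forces the two angles at the endpoints of $e$ to be strictly less than $\pi$, and one checks that the open quadrilateral embeds in the surface so that the new diagonal is realized as an embedded geodesic arc. Repeatedly flipping non-locally-Delaunay edges yields, upon termination, a triangulation in which every edge is locally Delaunay, which is equivalent to the triangulation being Delaunay.

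The main obstacle I expect is proving that the algorithm terminates, since the set of geometric triangulations of a flat surface is a priori infinite. In the Euclidean plane the classical argument uses strict monotonicity of the lexicographic sorted vector of triangle angles under Lawson flips, combined with finiteness of the state space. To adapt this to the flat setting, I would first establish a uniform bound on the edge lengths of all triangulations reachable from the initial one, for instance via a monotonicity property of the circumradii (or equivalently of the maximal edge length) under Lawson flips; I would then invoke the fact that for any length $L$ only finitely many geodesic arcs of length at most $L$ join the finite set of conical singularities, to confine the algorithm to a finite state space. Strict monotonicity of the sorted angle vector then rules out cycles and forces termination, at which point the first half of the argument is complete and can be combined with the flip-connectedness of Delaunay triangulations to finish.
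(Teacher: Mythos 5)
Your strategy is sound, but it is a genuinely different route from the one taken in the paper. You connect every geometric triangulation to a Delaunay triangulation by Lawson flips and then connect Delaunay triangulations through flips inside the convex Delaunay cells; this is essentially the known approach of Indermitte--Liebling--Troyanov--Cl\'emen\c{c}on and Bobenko--Springborn, and it can be made rigorous, including your termination scheme: the right invariant is the circumradius, via the lemma that each new circumdisk after a flip is contained in the union of the two old circumdisks, which bounds all edge lengths by twice the initial maximal circumradius; together with finiteness of saddle connections of bounded length and strict lexicographic increase of the sorted angle vector this gives termination. Be careful, though, that your parenthetical ``or equivalently of the maximal edge length'' is not correct --- a Lawson flip can create an edge longer than every edge previously present (it is only bounded by twice the maximal circumradius) --- and that the setting of the paper includes geodesic boundary (so you need the constrained Delaunay decomposition, with boundary edges never flipped), arbitrary cone angles, and possibly non-embedded quadrilaterals (here one checks that a non-locally-Delaunay edge always has two distinct adjacent triangles and that the open unfolded quadrilateral embeds, as you indicate); you also implicitly use the local-to-global Delaunay property (all edges locally Delaunay implies the triangulation refines the canonical Delaunay cell decomposition), which requires its own proof on cone surfaces. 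The paper instead argues by a minimal counterexample: it shows two non-flip-connected triangulations on a minimal surface can share no edge and must have all edges pairwise crossing, uses desingularization of crossing edges and cutting to force connected singular locus and genus zero, reduces to a polygon, and concludes with a lemma forcing the polygon to be a convex quadrilateral, where the statement is immediate. Your approach buys an effective algorithm and a canonical ``hub'' triangulation at the cost of substantially more machinery; the paper's argument is more elementary and self-contained, handling boundary and arbitrary cone angles uniformly, but it is purely existential and gives no control on the length of the flip chain.
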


\section{Results}

\begin{defn} In a flat surface $X$, we define the \textit{singular locus} $Sing(X)$ of $X$ as the union of the conical singularities and the boundary.
\end{defn}

\begin{lem} In any flat surface, there is at least one geometric triangulation.
\end{lem}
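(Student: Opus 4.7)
The plan is to build a geometric triangulation by extending a maximal family of pairwise disjoint saddle connections, i.e. geodesic segments joining conical singularities. The first step would be to verify that at least one saddle connection exists on $X$: choosing two conical singularities and minimizing length in a free homotopy class of arcs between them yields such a segment, by compactness of $X$ and the fact that there are no bigons in finite area flat surfaces (as noted in the remark above). If there is only one singularity, a shortest closed geodesic exists by compactness, and an analogous length-minimization then produces a saddle connection. I would then extend to a maximal pairwise disjoint family $\mathcal{F}$ containing all boundary edges, and consider the connected components $F_1,\ldots,F_k$ of $X\setminus\bigcup \mathcal{F}$.

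Next, I would argue that each face $F_i$ is a topological disk containing no conical singularity in its interior. If $F_i$ contained an interior singularity $p$, then a shortest geodesic from $p$ to a vertex on $\partial F_i$ would give a saddle connection disjoint from $\mathcal{F}$, contradicting maximality. If $F_i$ were not simply connected, a non-separating simple arc in $F_i$ joining two boundary vertices could, after length-minimization in its isotopy class, be realized as a new saddle connection inside $F_i$, again contradicting maximality.

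It remains to show that each face has exactly three sides. A simply connected flat region $F$ with $n$ geodesic boundary edges and no interior conical point has interior angles summing to $(n-2)\pi$, since curvature is concentrated at the boundary vertices. If $n\geq 4$, at least one vertex $v$ of $F$ has interior angle strictly less than $\pi$. An ``ear''-type argument, rotating a geodesic ray emitted from $v$ between its two adjacent boundary edges, produces a ray that terminates at another vertex: the set of directions for which the ray first hits the interior of a given boundary edge is open, so transitions between edges as the angle varies must occur at vertex directions. This yields a new saddle connection strictly inside $F$, contradicting the maximality of $\mathcal{F}$.

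The step I expect to be the main obstacle is the last one. The face $F$ need not embed in the Euclidean plane under the developing map, so one cannot directly invoke the classical ear theorem for simple polygons. The ray-rotation argument must be carried out intrinsically inside $F$, with care taken to show that the first-intersection point of a ray from $v$ with $\partial F$ depends continuously on direction on open subsets of the angular interval at $v$, and therefore must jump through a vertex as the direction sweeps between the two adjacent edges. As a fallback, one could invoke the Delaunay decomposition of $X$ centered at the conical singularities, which partitions $X$ into flat polygons inscribed in Euclidean disks, and then subdivide each such polygon into triangles by geodesic diagonals.
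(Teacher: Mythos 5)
Your route is correct in outline and genuinely different from the paper's. The paper constructs a triangulation by successive cuts: shortest arcs joining components of the singular locus, then non-separating geodesic chains to kill the genus, reducing to a polygon (a flat disk bounded by geodesic segments), which is then triangulated by induction on the number of corners using a corner of angle less than $\pi$ and a diagonal extracted from the geodesic representative of a nearby arc. You instead take a maximal family of disjoint saddle connections containing the boundary and show every complementary face is a triangle, using maximality to exclude interior singularities and nontrivial topology, Gauss--Bonnet to find a convex corner of any face with at least four sides, and a ray-rotation (ear) argument to produce a forbidden diagonal, with the Delaunay decomposition as a rigorous fallback. This is the standard argument in the flat-surfaces literature; it avoids the staged reductions and the fallback even gives a canonical triangulation, while the paper's cutting moves have the advantage of being reused in the proof of the main theorem. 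The crux is of the same nature in both: your ear step and the paper's brief assertion that the diagonal $AD$ exists are the same visibility statement for immersed, not necessarily embedded, flat disks.

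Two soft spots to address when writing this up. In the ray-rotation step, openness of the first-hit sets only yields a vertex direction if at least two distinct edges occur as first-hit edges; you must also dispose of the case where every ray of the open sector first meets the interior of a single edge. Developing the simply connected face into the plane helps here: it shows every ray reaches the boundary (the developed image is bounded, a developed ray is not) and lets you analyse the limits as the direction tends to the two edges adjacent to $v$. Second, your length minimizations generally produce broken geodesics through singularities rather than single segments, so you should extract one segment of the chain lying in the open face; likewise the Delaunay fallback is usually stated for closed surfaces, so with boundary you would double $X$ along its boundary or otherwise adapt it. These are fixable details, comparable in depth to what the paper itself leaves implicit in the polygon case.
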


\begin{proof}
We consider a flat surface $X$. If $Sing(X)$ is not connected, there is always at least one topological arc without self-intersection that connects two different components of $Sing(X)$. We take the shortest arc $\alpha$ with these requirements. It is a geodesic segment. Iterating this process provides a system of geodesic segments that join any connected components of $Sing(X)$. Cutting along these geodesic segments provides a new connected flat surface $X^{1}$ whose singular locus is connected. Existence of a geometric triangulation in $X^{1}$ implies existence of a geometric triangulation in $X$. Therefore, we reduced the problem to the case where the singular locus is connected.\newline
Using a similar argument, we can once again reduce the problem to the case of flat surfaces with genus zero. If $Sing(X)$ is connected and the genus of $X$ is nonzero, then $X$ contains a topological arc $\beta$ without self-intersection, whose both ends are the same conical singularity and that does not divide the surface. The geodesic representative of $\beta$ is a chain of geodesic segments that do not intersect each other and that do not divide $X$. Cutting along these geodesic segments provides a new flat surface with smaller genus and whose singular locus is still connected. Iterating this process reduces the existence of a geodesic triangulation in $X$ to the existence of a flat triangulation in some flat surface of genus zero.\newline
Following discrete Gauss-Bonnet theorem, there are no flat surfaces of genus zero without boundary and with a unique conical singularity. Therefore, a flat surface of genus zero whose singular locus is connected is a polygon. We still have to prove the lemma in the case of polygons.\newline
In any polygon there is a corner whose magnitude of angle is smaller than $\pi$. This corner $A$ has two neighbors $B$ and $C$ (in the cyclical ordering of the corners of the polygon). We consider the topological arc $\gamma$ that connects $B$ and $C$ staying close to two boundary geodesic segments. Concerning the geodesic representative of $\gamma$, there are three cases:\newline
- it is a unique geodesic segment of the boundary. The flat surface is a triangle and there is a flat triangulation.\newline
- it is a unique geodesic segment that does not belong to the boundary (it is a diagonal of the polygon). Cutting along this segment provides two flat surfaces with strictly fewer corners.\newline
- it is a chain of geodesic segments. Let $D$ be one of the intermediary conical singularity of the chain. There is a geodesic segment between $A$ and $D$ (a diagonal of the polygon). Cutting along this segment provides two flat surfaces with strictly fewer corners.\newline
We have a systematic process to cuts out a polygon into polygons with fewer corners. Besides, the lemma is trivially true for triangles. Therefore, the lemma holds for any polygon. This ends the proof.
\end{proof}

\begin{defn} For a pair of distinct geodesic segments $(\alpha,\beta)$ of a given flat surface, we define $i(\alpha,\beta)$ as the number of intersection points of $\alpha$ and $\beta$ outside their ends. This number is equal to the topological intersection.
\end{defn}

\begin{lem}
Let $X$ be a flat surface with conical singularities that admits two geodesic triangulations $S$ and $T$ such that:\newline
(i) for any pair $(\alpha,\beta) \in S \times T$, we have $i(\alpha,\beta)\geq1$.\newline
(ii) there is an edge $\alpha' \in S$ such that we have $i(\alpha',\beta)=1$ for any $\beta \in T$.\newline
Then $X$ is a convex quadrilateral.
\end{lem}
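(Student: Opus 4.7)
The plan is to trace $\alpha'$ through the triangulation $T$, unfold the resulting strip of triangles into a planar polygon, and then apply an ear-type argument together with the no-bigon property of flat surfaces of finite area.

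By hypothesis (ii), $\alpha'$ is cut by its $k$ transverse intersections with edges of $T$ into $k+1$ sub-segments, each lying in the interior of a triangle of $T$. This produces an ordered sequence $\Delta_1,\dots,\Delta_{k+1}$ of triangles of $T$, with $\Delta_i$ and $\Delta_{i+1}$ sharing the edge crossed at the $i$-th intersection. Using the flat structure of $X$, I develop this sequence isometrically in the Euclidean plane along $\alpha'$, obtaining a planar polygon $\widetilde P$ triangulated by $k$ chords, with $\alpha'$ realized as a straight chord from its starting endpoint $v_0$ to its ending endpoint $v_1$. The extremal triangles $\Delta_1$ and $\Delta_{k+1}$ are \emph{ears} of the induced triangulation of $\widetilde P$: each has two boundary edges of $\widetilde P$ and exactly one interior chord.

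The heart of the argument is an ear reduction. The chord $d_1$ of $\Delta_1$ separates $v_0$ from every other vertex of $\widetilde P$, and symmetrically the chord $d_k$ of $\Delta_{k+1}$ separates $v_1$. By (i), every edge $\alpha \in S$ with $\alpha\neq \alpha'$ must cross both $d_1$ and $d_k$ transversally in their interiors. Crossing $d_1$ forces $\alpha$ to have $v_0$ as one endpoint; crossing $d_k$ forces the other endpoint to be $v_1$. Hence every such $\alpha$ is a geodesic segment from $v_0$ to $v_1$ distinct from $\alpha'$. By the remark on the absence of bigons, two distinct edges of the same triangulation cannot be isotopic, which forbids any such $\alpha$ from existing. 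Therefore $\alpha'$ is the only edge of $S$ not already in $T$. Applying the symmetric argument with the roles of $S$ and $T$ exchanged shows that $T$ also contains only one edge not in $S$, so $k=1$.

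With $k=1$, the polygon $\widetilde P$ has four boundary vertices, making it a quadrilateral whose two diagonals are $\alpha'$ and the unique edge of $T$ distinct from edges of $S$. Both diagonals being realized as transversely intersecting geodesic segments forces the quadrilateral to be convex. Finally, $\widetilde P$ accounts for all of $X$: any triangle of $T$ adjacent to the strip through a shared edge would, by the same ear reduction applied at that boundary chord, demand further $S$-edges between $v_0$ and $v_1$, incompatible with $S$ being a triangulation. Hence $X$ coincides with $\widetilde P$ and is a convex quadrilateral.

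The most delicate step is the final verification that no further triangles of $T$ lie outside the strip, i.e.\ that the unfolding $\widetilde P$ is all of $X$. A related subtlety is the treatment of possible self-identifications of $\widetilde P$ when the sequence $\Delta_1,\dots,\Delta_{k+1}$ repeats a triangle; such degeneracies are again excluded by the no-bigon property combined with the single-crossing condition in (ii).
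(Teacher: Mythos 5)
Your overall strategy is the same as the paper's: follow $\alpha'$ through the triangles of $T$, argue that every other internal edge of $S$ is forced to run from $v_0$ to $v_1$ and be isotopic to $\alpha'$, invoke the no-bigon remark, and conclude with a convex quadrilateral. But there is a genuine gap at exactly the step you flag as delicate, and your proposed fix is circular. The ear reduction (``crossing $d_1$ forces $\alpha$ to end at $v_0$'') is valid only if the two edges of $\Delta_1$ adjacent to $v_0$ are boundary edges of $X$; otherwise $\alpha$ can simply leave $\Delta_1$ through one of them and never touch $v_0$. Their being boundary edges of the developed strip $\widetilde P$ is not enough --- you need them to be boundary edges of $X$, i.e.\ you need $\widetilde P = X$ \emph{before} running the ear reduction, yet you defer $\widetilde P = X$ to the end and justify it ``by the same ear reduction'', which presupposes what is to be proved. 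The sketched justification is also not the right mechanism. The correct one, which is the heart of the paper's proof, is a counting argument drawn directly from (ii): since $\alpha'$ crosses each internal edge of $T$ exactly once, the internal edges of $T$ are exactly the rungs $d_1,\dots,d_k$, each occurring once; for a non-terminal triangle the number of crossings of $\alpha'$ with its three edges is even, hence exactly two of its edges are internal and the third lies on the boundary of $X$ (this simultaneously rules out a triangle being traversed twice, your ``self-identification'' worry, since a second traversal would force some edge of $T$ to be crossed twice); for the terminal triangles, the two edges at the end corner must be boundary edges, since otherwise $\alpha'$ would self-intersect or cross an edge of $T$ more than once. Only after this does one know that the dual graph of $T$ is a path, that $X$ is a polygon, and that the ear and no-bigon arguments apply.

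Two smaller points. First, two distinct geodesic segments from $v_0$ to $v_1$ are not automatically isotopic, so the no-bigon remark does not by itself forbid a second such edge of $S$; the paper pins down the isotopy class by noting that every internal edge of $S$ crosses the internal edges of $T$ in the same order as $\alpha'$ (using that in a polygon two geodesic segments meet at most once, so the crossings required by (i) are single crossings). Second, your ``symmetric argument with the roles of $S$ and $T$ exchanged'' is unavailable, since hypothesis (ii) singles out a special edge of $S$ only; it is also unnecessary: once $S$ has a unique internal edge, $X$ consists of two triangles glued along it, hence is a quadrilateral, and then $T$ automatically has a single internal edge, the other diagonal, whose crossing with $\alpha'$ gives convexity.
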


\begin{proof}
We first choose an orientation of $\alpha'$. Since $\alpha'$ intersects each internal edge of $T$ once, the edges of $T$ can be linearly ordered according to the position of their intersection with $\alpha'$. Among triangles of $T$, we distinguish the two terminal triangles the ends of $\alpha'$ belong to. If the two terminal triangles coincide, then the two ends of $\alpha'$ belong to the same of corner of the terminal triangle because otherwise $\alpha'$ would intersect itself. Thus, $\alpha'$ crosses at least two times the edge opposite to this corner. Therefore, the two terminal triangles are distinct.\newline
In each of the two terminal triangles, $\alpha'$ intersects once the edge opposite to the end corner. The two other edges of the triangle belong to the boundary of the surface. Indeed, if they were internal edges, then $\alpha'$ would cross them at least one time and $\alpha'$ would either intersect itself or cross an internal edge more than once.\newline
Concerning triangles other than terminal, since the number of crossings of $\alpha'$ with the boundary of each of these triangles is even, exactly one edge of these triangles belongs to the boundary of the surface. Therefore, there is also a linear order for the triangles of the triangulation. The geometry of the flat surface is very restricted. It is a polygon, that is a topological disk with a boundary formed by a chain of geodesic segments. Since every internal edge of $S$ crosses each internal edge of $T$ once, its ends belong to the same corners of the same terminal triangles. They passes through each internal edge of $T$ in the same order.\newline
Consequently, any other edge of $S$ is isotopic to $\alpha'$. There are no bigons in flat surfaces so there is unique internal edge in S. Therefore, $X$ is a quadrilateral. Since there is another triangulation T whose unique internal edge crosses $\alpha'$, $X$ is a convex quadrilateral.
\end{proof}

\begin{proof}[Proof of the main theorem]
The number of triangles in a flat triangulation of a given flat surface is determined by the sum of the total angles of the conical singularities. In addition, flat triangles have a total angle of $\pi$. Therefore, each geometric triangulation of a given flat surface has the same number of triangles. As a consequence, they have the same number of internal edges (edges that do not belong to the boundary).
We suppose there exists a flat surface $X$ with a minimal number of internal edges and such that there are two flat triangulations $S$ and $T$ of $X$ that cannot be joined by a chain of flips.\newline
Triangulations $S$ and $T$ cannot have a common internal edge because we could cut along this edge and get a flat surface with two additional boundary edges and one less internal edge. The two induced geometric triangulations of this new surface cannot be joined by a chain of flips. Thus, the minimality hypothesis is negated.
Similarly, we prove that an edge $x$ of $S$ and an edge $y$ of $T$ always have a nontrivial intersection. Indeed, if there were such a pair of edges, we could complete $x$ and $y$ to get a new geometric triangulation $Z$. Indeed, any flat surface with conical singularities admits a geometric triangulation, see Lemma 2.2. Using the previous argument of minimality, $S$ and $Z$ would have a common edge and could be joined by a chain of flips. It is the same for $T$ and $Z$. Therefore, $S$ and $T$ could be joined by a chain of flips.\newline

For any pair of geodesic segments $(\alpha,\beta)$ of $X$ with nontrivial intersection, we can define a pair of topological arcs $(\gamma,\delta)$ that is a desingularization of the pair $(\alpha,\beta)$. These topological arcs (without self-intersection) follow the geodesic segments outside a neighborhood of the intersection points of $\alpha$ and $\beta$. Near these intersection points, they are drawn in such a way that $\gamma$ and $\delta$ do not intersect each other. In addition, they do not intersect either $\alpha$ or $\beta$. In fact, there are two ways to provide such a desingularization depending on which endpoints of $\alpha$ and $\beta$ are connected by desingularized arcs.\newline
Homotopy classes of topological arcs $\gamma$ and $\delta$ have geodesic representatives that are a chain of geodesic segments. These geodesic representatives minimize self-intersection number and minimize geometric intersection with $\alpha$ and $\beta$ in their homotopy class.\newline
We are going to prove that in our minimal hypothetic counter-example, the singular locus is connected. We suppose that it is not. In each of the two triangulations $S$ and $T$ that cannot be joined by a chain of flips, we consider internal edges $\alpha \in S$ and $\beta \in T$ that join two connected components of the singular locus. Indeed, in any triangulation there is always enough internal edges to relate the connected components of the singular locus. The two edges $\alpha$ and $\beta$ intersect each other. Desingularization provides two chains of geodesic segments that relates several connected components of the singular locus. Any geodesic segment of these chains does not intersect $\alpha$ and $\beta$. Therefore this segment $u$ belongs to the boundary of $X$. Otherwise, we could provide a triangulation $S'$ that contains both $u$ and $\alpha$. In the same way, there would be another triangulation $T'$ with $u$ and $\beta$. Cutting along $u$ provides a simpler flat surface (with fewer internal edges) where any two geometric triangulations are joined by a chain of flips. Therefore, $S'$ and $T'$ are joined by a chain of flips. Since $S$ and $S'$ (similarly for $T$ and $T'$) share an internal edge, they are joined by a chain of flips (using the same argument). Consequently, any such segment $u$ belongs to the boundary of the surface. So, the singular locus of $X$ is connected.\newline
Similarly, we prove that our minimal hypothetic counter-example has genus zero. We suppose its genus is nonzero. Then, in any triangulation of $X$, there is at least one internal edge such that cutting along it does not disconnect the surface. In each of the two triangulations $S$ and $T$ that cannot be joined by a chain of flips, we consider internal edges $\alpha \in S$ and $\beta \in T$ that do not disconnect the surface. These two geodesic segments have a nontrivial intersection. After desingularization, we get a topological arc $\gamma$ that does not intersect $\alpha$ nor $\beta$. Arc $\gamma$ does not disconnect the surface so its geodesic representative does not disconnect the surface either. It is true for any geodesic segment of the chain. In the same way as we proved connectedness of the singular locus, from existence of such a geodesic segment $u$ that does not intersect $\alpha$ nor $\beta$ we can construct a chain of flips that join $S$ and $T$. Therefore, our minimal counter-example is a flat surface of genus zero whose singular locus is connected.\newline
A flat surface of genus zero whose singular locus is connected is a polygon, that is a topological disk whose boundary is a cyclic chain of geodesic segments. There should be at least three segments in the boundary because otherwise, angles in the inner corners would be degenerated.\newline
In polygons, two geodesic segments have at most one intersection otherwise they would form a flat bigon. If there were two geometric triangulations that could not be joined by a chain of flips, then they would satisfy the hypothesis of Lemma 2.4 and the polygon would be a convex quadrilateral. This ends the proof.
\end{proof}

\textit{Acknowledgements.} I thank Dylan Thurston for his valuable remarks. The author is supported by a fellowship of Weizmann Institute of Science. This research was supported by the Israel Science Foundation (grant No. 1167/17).\newline

\nopagebreak
\vskip.5cm
\end{document}